\newtheorem{theorem}{Theorem}[section]
\newtheorem{corollary}[theorem]{Corollary}
\theoremstyle{definition}
\theoremstyle{remark}
\newtheorem{remark}[theorem]{Remark}
\begin{document}

\title{Even and odd compositions with restricted parts} 
\author{Jia Huang}
\address{Department of Mathematics and Statistics, University of Nebraska, Kearney, NE 68849, USA}
\curraddr{}
\email{huangj2@unk.edu}

\begin{abstract}
A result of Legendre asserts that the difference between the numbers of (length) even and odd partitions of $n$ into distinct parts is $0$, $1$, or $-1$; this also follows from Euler's pentagonal number theorem.
We establish an analogous result for compositions and obtain some generalizations that are related to various entries in the On-Line Encyclopedia of Integer Sequences.
\end{abstract}

\keywords{composition; length; restricted parts}
\subjclass{05A15, 05A19}

\maketitle

\section{Introduction}

A \emph{composition} of $n$ is a sequence $\alpha=(\alpha_1,\ldots,\alpha_\ell)$ of positive integers with \emph{size} $|\alpha| := \alpha_1 + \cdots + \alpha_\ell = n$; the \emph{parts} of $\alpha$ are $\alpha_1,\ldots,\alpha_\ell$, and the \emph{length} of $\alpha$ is $\ell(\alpha) := \ell$.
We often drop parentheses and commas when writing a composition whose parts are single digit numbers.
A \emph{partition} of $n$ is a composition of $n$ whose parts are decreasing.
A composition/partition is \emph{even} (or \emph{odd}, resp.) if its length is even (or odd, resp.).
There are $2^{n-1}$ compositions of $n$ since they correspond to binary sequences of length $n-1$.
On the other hand, although there are recursive and asymptotic formulae, no closed formula is known for the number of partitions of $n$.

Compositions and partitions have been extensively studied due to their significance in discrete mathematics, number theory, representation theory, and many other areas.
This paper is motivated by the following result of Legendre~\cite{Legendre}.

\begin{theorem}[Legendre]\label{thm:Legendre}
The number of even partitions of $n$ into distinct parts minus the number of odd partitions of $n$ into distinct parts equals $(-1)^j$ if $n = j(3j\pm 1)/2$ for some integer $j\ge0$ or $0$ otherwise.
\end{theorem}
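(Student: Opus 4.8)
The plan is to prove this via a sign-reversing involution, which is the standard combinatorial approach to Euler's pentagonal number theorem and gives Legendre's result as an immediate consequence. Let me sketch how I'd set this up.

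First I would reinterpret the quantity in question. Let $D_e(n)$ and $D_o(n)$ denote the numbers of even and odd partitions of $n$ into distinct parts. The goal is to compute $D_e(n) - D_o(n) = \sum_{\lambda} (-1)^{\ell(\lambda)}$, where $\lambda$ ranges over all partitions of $n$ into distinct parts. This is exactly the coefficient of $q^n$ in $\prod_{k\ge 1}(1 - q^k)$, so the statement is equivalent to Euler's pentagonal number theorem.

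The main step is to construct an involution on the set of distinct-part partitions of $n$ that reverses length parity, defined on all but a small exceptional set. I would use Franklin's classical involution: given a partition $\lambda$ with distinct parts, let $s$ be the size of the smallest part and let $r$ be the length of the ``staircase'' run of consecutive parts starting from the largest part $\lambda_1$ (the maximal $r$ with $\lambda_1, \lambda_1 - 1, \ldots, \lambda_1 - r + 1$ all present). The map either removes the smallest part $s$ and appends $1$ to each of the top $s$ parts, or removes a unit from each of the top $r$ parts and creates a new smallest part of size $r$, choosing whichever operation keeps the parts distinct. Each move changes the length by exactly one, so it flips the parity of $\ell(\lambda)$, and one checks the two moves are mutually inverse.

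The hard part will be the careful case analysis identifying exactly when Franklin's involution is undefined, since those fixed points are precisely the source of the $(-1)^j$ terms. The involution breaks down only when the smallest-part run and the staircase run overlap or are adjacent, forcing $s \in \{r, r+1\}$ together with the bottom part meeting the top run; tracing through these boundary cases shows the exceptional partitions are exactly $(2j-1, 2j-2, \ldots, j)$ and $(2j, 2j-1, \ldots, j+1)$, which have sizes $j(3j-1)/2$ and $j(3j+1)/2$ and length $j$, contributing sign $(-1)^j$. I would verify that for all other $n$ the involution pairs everything up so the signed count is $0$, thereby obtaining the stated formula.
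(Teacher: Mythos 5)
Your proposal is correct in outline, and it is worth noting that the paper itself gives no proof of this statement: Legendre's theorem is quoted as a known result, with the remark that it can be read off from Euler's pentagonal number theorem, i.e.\ from the identity $\prod_{k\ge1}(1-q^k)=\sum_{\lambda}(-1)^{\ell(\lambda)}q^{|\lambda|}$ (the sum over distinct-part partitions), which the paper also states without proof. You make the same translation into a signed sum over distinct-part partitions, but then actually prove the resulting identity via Franklin's involution, so your argument is self-contained where the paper's is a citation, and it yields the pentagonal number theorem as a byproduct. The skeleton you give is the standard one and the details check out: the exceptional partitions $(2j-1,2j-2,\ldots,j)$ and $(2j,2j-1,\ldots,j+1)$ have length $j$ and sizes $j(3j-1)/2$ and $j(3j+1)/2$, so each contributes $(-1)^{\ell}=(-1)^j$ to the even-minus-odd count, matching the claimed formula (including $j=0$, the empty partition of $n=0$). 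Two small points to tighten when you write it up: the case split in Franklin's map is usually stated as ``apply the first move if $s\le r$ and the second if $s>r$'' rather than ``whichever keeps the parts distinct,'' since one must separately verify that the designated move produces a distinct-part partition and that the two moves invert each other on their respective ranges; and the breakdown analysis should explicitly require that the top staircase run reaches the smallest part (so the partition is a single staircase) in addition to $s\in\{r,r+1\}$ --- if the runs are disjoint the moves go through even when $s=r$ or $s=r+1$.
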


For example, there is only one even partition ($31$) and only one odd partition ($4$) among all partitions of $n=4$ ($4, 31$, $22$, $211$, $1111$), giving a difference of $0$, and there are exactly two even partitions ($41$, $32$) and one odd partition ($5$) among all partitions of $n=5=2(3j-1)$ ($5, 41, 32, 311, 221, 2111, 11111$), giving a difference of $1=(-1)^j$, where $j=2$.

Although Theorem~\ref{thm:Legendre} is often attributed to Legendre, it can be derived from the following result of Euler, which is known as Euler's pentagonal number theorem since the \emph{pentagonal numbers} are given by $j(3j-1)/2$ for $j=1,2,\ldots$.

\begin{theorem}[Euler]
One has $(1-x)(1-x^2)(1-x^3)\cdots = 1-x-x^2+x^5+x^7-\cdots$, i.e., 
\[ \prod_{n=1}^\infty (1-x^n) = 1 + \sum_{j=1}^\infty (-1)^j \left( x^{j(3j+1)/2} + x^{j(3j-1)/2} \right). \]  
\end{theorem}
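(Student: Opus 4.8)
The plan is to prove this as an identity of formal power series (equivalently, for $|x|<1$), since the coefficient of each $x^n$ depends on only finitely many factors of the product. First I would expand the product: a term in $\prod_{n\ge1}(1-x^n)$ is obtained by choosing, for each $n$, either $1$ or $-x^n$, with $-x^n$ selected for finitely many distinct values of $n$. Hence the coefficient of $x^n$ equals
\[ \sum_{\lambda} (-1)^{\ell(\lambda)}, \]
where $\lambda$ ranges over all partitions of $n$ into distinct parts and $\ell(\lambda)$ is the length. Writing $p_{\mathrm e}(n)$ and $p_{\mathrm o}(n)$ for the numbers of even and odd such partitions, the coefficient is $p_{\mathrm e}(n)-p_{\mathrm o}(n)$, so the identity is equivalent to showing that this difference equals $(-1)^j$ when $n=j(3j\pm1)/2$ and $0$ otherwise; this is precisely Legendre's Theorem~\ref{thm:Legendre}.

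The heart of the argument is a sign-reversing involution on partitions into distinct parts (Franklin's involution). Given such a partition $\lambda=(\lambda_1>\cdots>\lambda_k)$, let $s=\lambda_k$ be the smallest part and let $r$ be the length of the top \emph{slope}, i.e.\ the largest integer with $\lambda_i=\lambda_1-i+1$ for $1\le i\le r$. I would define a map that, when $s\le r$, deletes the smallest part $s$ and increases each of the top $s$ parts by $1$; and when $s>r$, decreases each of the top $r$ parts by $1$ and appends a new smallest part equal to $r$. Each move changes the length by exactly one and preserves both the size and the distinctness of the parts, so it is sign-reversing.

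The key step is to verify that this map is a well-defined involution away from a small set of fixed points. I would check that the two moves are mutually inverse whenever they apply, and that they do apply (producing a valid partition into distinct parts) except precisely when the slope meets the smallest part. This happens in exactly two configurations: $s=r$ with $\lambda=(2r-1,2r-2,\ldots,r)$, of size $r(3r-1)/2$, and $s=r+1$ with $\lambda=(2r,2r-1,\ldots,r+1)$, of size $r(3r+1)/2$. Each such fixed point has length $r$, contributing $(-1)^r$, while all other partitions cancel in pairs of opposite sign. I expect this boundary analysis---distinguishing whether the smallest part is or is not contained in the slope---to be the main obstacle, since both the fixed points and the potential failure of well-definedness arise exactly there.
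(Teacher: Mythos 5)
The paper states Euler's pentagonal number theorem as classical background and gives no proof of it (it is invoked only to note that Legendre's Theorem~\ref{thm:Legendre} follows from it), so there is no in-paper argument to compare against; your proposal must be judged on its own. What you give is the standard Franklin involution proof, and it is essentially correct: the reduction of the coefficient of $x^n$ in $\prod_{n\ge1}(1-x^n)$ to the signed count $p_{\mathrm e}(n)-p_{\mathrm o}(n)$ over partitions into distinct parts is right, the two moves (governed by comparing the smallest part $s$ with the slope length $r$) are correctly stated and mutually inverse where defined, and you identify exactly the two exceptional staircase partitions $(2r-1,\ldots,r)$ and $(2r,\ldots,r+1)$, with the correct sizes $r(3r\mp1)/2$ and length $r$, hence contribution $(-1)^r$. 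Note that your argument runs in the opposite logical direction from the paper's exposition: the paper derives Legendre from Euler, while you prove Legendre directly (via Franklin) and read off Euler's identity; there is no circularity since Franklin's involution is self-contained. One small imprecision: the failure of the involution is not characterized by ``the slope meets the smallest part'' --- for instance $\lambda=(5,4)$ has its smallest part in the slope yet the second move applies without trouble, producing $(4,3,2)$; the moves fail only when the smallest part lies in the slope \emph{and} $s=r$ or $s=r+1$, which is exactly the pair of configurations you then list, so the conclusion stands once that sentence is tightened.
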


Given the subtle differences between partitions and compositions in not only their definitions and enumerative results as mentioned before but also many other aspects (an algebraic manifestation is given by the representation theory of the $0$-Hecke algebra~\cite{H0Tab} compared with the well-known representation theory of the symmetric group), it is natural to ask for an analogous result of Legendre's theorem for compositions.
However, the number of even compositions of $n$ with distinct parts minus the number of odd compositions of $n$ with distinct parts is given by the sequence 
$1, -1, -1, 1, 1, 3, -3, -1, -7, -11, 7, 3, 15, 35, 71, -35, 25, -57, \ldots$, 
which does not resemble Legendre's theorem; see the On-line Encyclopedia of Integer Sequences (OEIS)~\cite[A339435]{OEIS}.
To remedy this, we recall another famous result of Euler. 

\begin{theorem}[Euler]
The number of partitions of $n$ into distinct parts equals the number of partitions of $n$ into odd parts.
\end{theorem}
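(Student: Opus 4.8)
The plan is to prove the identity at the level of generating functions. First I would record the two relevant series
\[ D(x) = \prod_{k=1}^\infty (1+x^k), \qquad O(x) = \prod_{k=1}^\infty \frac{1}{1-x^{2k-1}}, \]
observing that the coefficient of $x^n$ in $D(x)$ counts the partitions of $n$ into distinct parts, since the factor $(1+x^k)$ records whether or not the part $k$ is used, while the coefficient of $x^n$ in $O(x)$ counts the partitions of $n$ into odd parts, since $\frac{1}{1-x^{2k-1}} = 1 + x^{2k-1} + x^{2(2k-1)} + \cdots$ records how many times the odd part $2k-1$ is used. It then suffices to prove the single identity $D(x) = O(x)$ in the ring $\ZZ[[x]]$ of formal power series and compare coefficients of $x^n$.

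The key step is a short algebraic manipulation. Using $1+x^k = (1-x^{2k})/(1-x^k)$, I would rewrite
\[ D(x) = \prod_{k=1}^\infty \frac{1-x^{2k}}{1-x^k} = \frac{\prod_{k=1}^\infty (1-x^{2k})}{\prod_{k=1}^\infty (1-x^k)}. \]
In the denominator the factors of even index $k=2j$ are precisely $\prod_{j=1}^\infty (1-x^{2j})$, which cancels the entire numerator and leaves only the factors of odd index, so that $D(x) = \prod_{j=1}^\infty 1/(1-x^{2j-1}) = O(x)$, as desired.

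Since the computation is so brief, the only genuine obstacle is making the rearrangement and cancellation of infinitely many factors rigorous. I would handle this by working with the truncated products $\prod_{k=1}^{N}$, noting that the coefficient of each $x^n$ stabilizes once $N \ge n$, and then passing to the limit in $\ZZ[[x]]$, where the factorwise identity above holds after every finite truncation. Finally, I would remark that a fully combinatorial proof is also available via Glaisher's bijection: one records, for each odd part of a partition into odd parts, the binary expansion of its multiplicity, and the unique factorization $m = 2^{a}(2b-1)$ of every positive integer converts this data into a partition into distinct parts, with the map being reversible; this explicit correspondence would be the natural tool to invoke if a bijective statement were needed later.
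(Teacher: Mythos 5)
Your proof is correct: the generating-function identity $\prod_k(1+x^k)=\prod_k\frac{1-x^{2k}}{1-x^k}=\prod_j\frac{1}{1-x^{2j-1}}$ is the standard argument, your truncation remark properly handles the formal-power-series rearrangement, and the aside on Glaisher's bijection is accurate. The paper itself gives no proof of this statement --- it quotes Euler's theorem as classical background motivation --- so there is nothing to compare against; your argument is the canonical one and would serve as a complete proof.
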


The number of even partitions of $n$ into odd parts minus the number of odd partitions pf $n$ into odd parts is given by another known sequence $1, -1, 1, -2, 2, -3, 4, -5, 6$, $-8, 10, -12, 15, -18, 22, -27, 32, -38, \ldots$ 
in OEIS~\cite[A081360]{OEIS}.
In fact, this is simply a signed version of the number of partitions of $n$ into odd parts~\cite[A000009]{OEIS} since the length of a partition of $n$ into odd parts has the same parity as $n$. 
Nevertheless, Euler's theorem on partitions of $n$ into distinct/odd parts opens a door for us to think about other related restrictions on the parts of a composition.
In particular, the following composition analogue of Euler's partition theorem comes to our mind.

\begin{theorem}[Cayley---Stanley]\label{thm:comp1}
The number of compositions of $n$ with odd parts equals the number of compositions of $n + 1$ with parts greater than one. 
\end{theorem}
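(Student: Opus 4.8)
The plan is to compute the two relevant generating functions, both of which turn out to be simple rational functions with denominator $1-x-x^2$, and then to observe that the desired equality reduces to a single multiplication by $x$. First I would set up $C(x):=\sum_{n\ge1}c_nx^n$, where $c_n$ is the number of compositions of $n$ into odd parts. Such a composition is a nonempty sequence of parts each drawn from $\{1,3,5,\ldots\}$, so the single-part generating function is $x+x^3+x^5+\cdots = x/(1-x^2)$, and summing the geometric series over all lengths $\ell\ge1$ gives
\[
C(x)=\sum_{\ell\ge1}\left(\frac{x}{1-x^2}\right)^\ell=\frac{x/(1-x^2)}{1-x/(1-x^2)}=\frac{x}{1-x-x^2}.
\]

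Next I would carry out the identical computation for $D(x):=\sum_{m\ge1}d_mx^m$, where $d_m$ is the number of compositions of $m$ into parts greater than one. Here the single-part generating function is $x^2+x^3+\cdots=x^2/(1-x)$, so
\[
D(x)=\sum_{k\ge1}\left(\frac{x^2}{1-x}\right)^k=\frac{x^2/(1-x)}{1-x^2/(1-x)}=\frac{x^2}{1-x-x^2}=x\,C(x).
\]
Comparing coefficients of $x^{n+1}$ on the two sides of $D(x)=xC(x)$ yields $d_{n+1}=c_n$ for all $n\ge1$, which is exactly the statement. Since this final step is a one-line identity, there is essentially no obstacle; the only point requiring care is the bookkeeping at the bottom of the range, namely keeping the empty composition out of both counts (equivalently, starting each geometric series at length one) so that the index shift $D(x)=xC(x)$ lines up correctly and one does not accidentally match $c_0$ with $d_1$.

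Finally, I would remark that a bijective proof is available and perhaps more illuminating. Writing an odd-part composition as $(2k_1+1,\ldots,2k_\ell+1)$ and recording the word $2^{k_1}1\,2^{k_2}1\cdots1\,2^{k_\ell}$ sets up a bijection with compositions of $n-1$ into parts $1$ and $2$, where the $\ell-1$ separating $1$'s encode the boundaries between consecutive parts. A dual construction, sending $(\beta_1,\ldots,\beta_k)$ with each $\beta_i\ge2$ to the word $1^{\beta_1-2}2\,1^{\beta_2-2}2\cdots2\,1^{\beta_k-2}$ (now the $k-1$ separating $2$'s mark the boundaries), identifies compositions of $n+1$ into parts greater than one with the same set of $\{1,2\}$-compositions of $n-1$. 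Composing these two bijections proves the theorem combinatorially, the two quantities being counted in both cases by a Fibonacci number.
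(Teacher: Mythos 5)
Your proof is correct and follows essentially the route the paper cites: the paper attributes the result to the fact that both quantities equal the Fibonacci number $F_n$ (Cayley, Stanley), and your generating-function computation $C(x)=x/(1-x-x^2)$, $D(x)=xC(x)$ is exactly that observation made explicit. Your concluding bijective sketch through $\{1,2\}$-compositions of $n-1$ likewise parallels the bijection of Sills that the paper references.
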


Cayley~\cite{Cayley} showed that the first number in Theorem~\ref{thm:comp1} equals the \emph{Fibonacci number} $F_n$ defined by the recursive relation $F_n:=F_{n-1}+F_{n-2}$ for $n\ge2$ with initial conditions $F_0:=1$ and $F_1:=1$, and the second number in Theorem~\ref{thm:comp1} also equals $F_n$ by Stanley~\cite{EC1}.
Recently, Sills~\cite{Sills} provided a bijective proof for Theorem~\ref{thm:comp1}.
Motivated by Theorem~\ref{thm:comp1}, we provide the following extension of Legendre's theorem to compositions, which involves a periodic sequence $1, 1, 0, -1, -1, 0, \ldots$ of $0$, $1$, and $-1$ with period $6$~\cite[A010892]{OEIS}.

\begin{theorem}\label{thm1}
Define $b_n:=c_{n,o}-c_{n,e}$, where $c_{n,o}$ (or $c_{n,e}$, resp.) is the number of odd (or even, resp.) compositions of $n + 1$ with parts greater than one.
Then $b_n = (-1)^j$ if $n\in\{3j+1, 3j+2\}$ or $b_n=0$ otherwise.
\end{theorem}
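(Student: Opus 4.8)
The plan is to extract the generating function for compositions with parts greater than one, weighted by a sign that records the parity of the length, and then recognize the result as a known rational function whose coefficients are exactly the periodic sequence $1,1,0,-1,-1,0,\ldots$.

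\medskip

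First I would set up the relevant generating function. For a composition whose parts all lie in $\{2,3,4,\ldots\}$, each part contributes a factor $-x^k$ for $k\ge 2$ (the minus sign records that each part toggles the parity of the length, so a length-$\ell$ composition acquires the weight $(-1)^\ell$). Summing over all compositions with parts greater than one gives
\[
\sum_{m\ge 0} \left( -\sum_{k\ge 2} x^k \right)^{\! m}
= \frac{1}{\,1 + \dfrac{x^2}{1-x}\,}
= \frac{1-x}{1-x+x^2}.
\]
By definition $b_n = c_{n,o}-c_{n,e} = \sum_\alpha (-1)^{\ell(\alpha)}(-1)=-\sum_\alpha (-1)^{\ell(\alpha)}$ over compositions $\alpha$ of $n+1$ with parts $>1$; being careful with this global sign, the coefficient of $x^{n+1}$ in $\tfrac{1-x}{1-x+x^2}$ (up to sign) will equal $b_n$. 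I would verify the sign convention against the small cases already computed in the paper (for instance, matching $b_0,b_1$) before proceeding.

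\medskip

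Next I would identify the coefficients of $\tfrac{1-x}{1-x+x^2}$ explicitly. The key algebraic fact is that $1-x+x^2$ divides $1+x^3$, so
\[
\frac{1-x}{1-x+x^2} = \frac{(1-x)(1+x)}{1+x^3} = \frac{1-x^2}{1+x^3}
= (1-x^2)\sum_{j\ge 0} (-1)^j x^{3j}.
\]
Expanding, the coefficient of $x^{3j}$ is $(-1)^j$, the coefficient of $x^{3j+2}$ is $-(-1)^{j}=(-1)^{j+1}$, and the coefficient of $x^{3j+1}$ is $0$. This is precisely the period-six pattern $1,0,-1,-1,0,1,\ldots$ of A010892, and re-indexing by $n+1$ in place of the exponent (together with the overall sign fixed above) yields $b_n=(-1)^j$ when $n\in\{3j+1,3j+2\}$ and $b_n=0$ otherwise.

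\medskip

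The main obstacle I anticipate is purely bookkeeping rather than conceptual: pinning down the overall sign and the index shift so that the claimed formula in terms of $n$ (with compositions of $n+1$) exactly matches the exponent-indexed coefficients of $\tfrac{1-x}{1-x+x^2}$. The factorization $1-x+x^2 \mid 1+x^3$ does the real work and turns an a priori complicated recurrence into a transparent geometric series; once the indexing is calibrated against the small cases, the result follows immediately. An alternative route, which I would mention as a check, is to read off the linear recurrence $b_n = b_{n-1}-b_{n-2}$ directly from the denominator $1-x+x^2$ and verify that the periodic sequence satisfies it with the correct initial conditions.
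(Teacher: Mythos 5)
Your proof is correct, and it follows essentially the same route as the paper: the paper obtains the signed generating function $\frac{1-x}{1-x+x^k}$ in the analytic proof of its general result (Theorem~\ref{thm2}, of which Theorem~\ref{thm1} is the case $k=2$), and then extracts the periodic pattern via exactly your factorization $\frac{1-x}{1-x+x^2}=\frac{1-x^2}{1+x^3}$ in Corollary~\ref{cor:period}. Your sign and index bookkeeping checks out ($b_n=-[x^{n+1}]\frac{1-x^2}{1+x^3}$ gives $b_{3j+1}=b_{3j+2}=(-1)^j$ and $b_{3j}=0$), so nothing is missing.
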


We also obtain some generalizations of Theorem~\ref{thm1}, which are outlined below; note that we have ``odd minus even'' instead of ``even minus odd'' to make the difference as simple as possible in our results.

In Section~\ref{sec:Munagi}, we establish a result which includes Theorem~\ref{thm1} as a special case. 
This gives a signed version of an earlier result by Munagi~\cite[Theorem 1.2]{Munagi} (see also Theorem~\ref{thm:comp2}), which generalized Theorem~\ref{thm:comp1} in a similar way as the well-known generalization (Theorem~\ref{thm:Glaisher}) of Euler's partition theorem due to Glaisher.

In Section~\ref{sec:Nyirenda}, we provide a composition analogue of an extension (Theorem~\ref{thm:Nyirenda}) of Legendre's theorem obtained recently by Nyirenda~\cite{Nyirenda} using extra congruence restrictions on the parts of a partition.
A special case (Corollary~\ref{cor:period}) of our result resembles Legendre's Theorem as it involves a periodic sequence whose $j$th term is $(-1)^j$ if $n\in\{3rj+s+1,3rj+r+s+1\}$ for some integer $j\ge0$ or $0$ otherwise, where $r>s\ge0$, and this also implies Theorem~\ref{thm1} when $(r,s)=(1,0)$.

By relaxing the restriction on the parts of a partition, Franklin obtained a further generalization (Theorem~\ref{thm:Franklin}) of Glaisher's theorem.
In our recent work~\cite{CompRes}, we obtained an analogous result for compositions, which includes Munagi's result as a special case.
In Section~\ref{sec:Huang}, we obtain a signed version of this result together with another variation, giving new interpretations for two entries in OEIS~\cite{OEIS}.

Lastly, we ask some questions for future research in Section~\ref{sec:questions} based on various Legendre-type results of Andrews~\cite{Andrews} and Nyirenda---Mugwangwavari~\cite{NM} on partitions with initial repetitions.

\section{A signed version of Munagi's Theorem}\label{sec:Munagi}

First, we recall a well-known result of Glaisher, which specializes to Euler's partition theorem when $k=2$.

\begin{theorem}[Glaisher]\label{thm:Glaisher}
Given an integer $k\ge1$, the number of partitions of $n$ with no part occurring $k$ or more times equals the number of partitions of $n$ with no parts divisible by $k$.
\end{theorem}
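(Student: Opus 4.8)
The plan is to give the standard generating-function identity and, since the paper emphasizes bijective arguments, to outline Glaisher's explicit bijection as well. For the generating-function route, I would first encode each family by an infinite product. A partition in which no part occurs $k$ or more times allows each part value $m$ to appear $0,1,\ldots,k-1$ times, so its generating function is $\prod_{m\ge1}(1+x^m+\cdots+x^{(k-1)m})$, which telescopes term by term to $\prod_{m\ge1}(1-x^{km})/(1-x^m)$. A partition with no part divisible by $k$ has generating function $\prod_{m\ge1,\,k\nmid m}(1-x^m)^{-1}$, and splitting off the factors with $k\mid m$ rewrites this as $\prod_{m\ge1}(1-x^{km})/\prod_{m\ge1}(1-x^m)$. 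The two expressions coincide, so comparing the coefficients of $x^n$ proves the theorem; setting $k=2$ recovers Euler's theorem on distinct versus odd parts.

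For a combinatorial proof I would construct Glaisher's bijection directly. Given a partition $\mu$ with no part divisible by $k$, I record for each part value $m$ its multiplicity $a_m$, expand $a_m=\sum_{i\ge0} d_i k^i$ in base $k$ with digits $0\le d_i\le k-1$, and form the image partition containing $d_i$ copies of $mk^i$ for each $m$ and $i$. Every resulting part then occurs fewer than $k$ times, so the image lies in the target family. The inverse takes a partition in which each value appears fewer than $k$ times, writes each part uniquely as $mk^i$ with $k\nmid m$, and reassembles the multiplicities by summing the contributions $d_i k^i$.

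The main point to verify, and the only genuine obstacle, is that these two maps are mutually inverse, which rests on two uniqueness facts: every positive integer has a unique base-$k$ expansion, and every positive integer factors uniquely as $mk^i$ with $k\nmid m$. Together these guarantee that distinct pairs $(m,i)$ produce distinct parts $mk^i$, so no collisions occur when assembling the image, and that the digit data can be recovered unambiguously from it. Checking that the maps preserve the size $n$ is then an immediate bookkeeping step, since $a_m\cdot m=\sum_{i\ge0} d_i(mk^i)$ by construction.
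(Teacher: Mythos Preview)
Your argument is correct: both the generating-function identity and the Glaisher bijection are the standard proofs, and the verification you sketch (uniqueness of base-$k$ expansion, uniqueness of the factorization $p=mk^i$ with $k\nmid m$) is exactly what makes the bijection well defined and invertible.

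However, there is nothing to compare against: the paper does not prove Theorem~\ref{thm:Glaisher}. It is stated only as background (``we recall a well-known result of Glaisher'') to motivate Munagi's composition analogue, and no proof is given or claimed. So your proposal is not so much a different route as a supplement the paper chose to omit.
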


Similarly, Munagi~\cite[Theorem 1.2]{Munagi} generalized Theorem~\ref{thm:comp1}, the composition analogue of Euler's theorem, to the following result using the zigzag graphs of compositions.

\begin{theorem}[Munagi]\label{thm:comp2}
The number of compositions of $n$ with parts congruent to $1$ modulo $k$ equals the number of compositions of $n + k - 1$ with parts no less than $k$.
\end{theorem}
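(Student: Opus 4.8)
The plan is to prove the identity bijectively by routing both sides through a single auxiliary family: the compositions of a given size into parts equal to $1$ or $k$ only. Assume $k \ge 2$ (the case $k=1$ is trivial, since both sides then count all compositions of $n$). Write $\mathcal{D}_m$ for the set of such $\{1,k\}$-compositions of $m$ and set $d_m := |\mathcal{D}_m|$. The first step is a size-preserving bijection between the compositions of $n$ with every part congruent to $1$ modulo $k$ and the members of $\mathcal{D}_n$ whose first part is $1$: writing such a part as $\alpha_i = 1 + k q_i$ with $q_i \ge 0$, I send it to the block $(1,k,\ldots,k)$ consisting of a single $1$ followed by $q_i$ copies of $k$, and then concatenate the blocks. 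The inverse regroups a $\{1,k\}$-composition beginning with $1$ by cutting just before each occurrence of $1$, so every block has the shape $(1,k^{\,q})$ and records the part $1+kq$.

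The second step is the analogous translation on the other side: the compositions of $N := n+k-1$ with every part at least $k$ correspond to the members of $\mathcal{D}_N$ whose first part is $k$. Here I write a part $\beta_j \ge k$ as $k + (\beta_j - k)$ and send it to the block $(k,1,\ldots,1)$ consisting of a single $k$ followed by $\beta_j - k$ copies of $1$; concatenation together with the obvious regrouping (cut just before each occurrence of $k$) gives mutually inverse maps.

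With these two translations in hand, the theorem reduces to a statement purely about $\{1,k\}$-compositions: the number of elements of $\mathcal{D}_n$ beginning with $1$ equals the number of elements of $\mathcal{D}_{n+k-1}$ beginning with $k$. But deleting the leading part identifies each of these two sets with $\mathcal{D}_{n-1}$, so both counts equal $d_{n-1}$ and the identity follows at once. Composing the three maps even yields an explicit bijection between the two original families: translate to a $\{1,k\}$-word beginning with $1$, replace that leading $1$ by a leading $k$ (which raises the total size by exactly $k-1$), and translate back.

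The only point demanding genuine care, rather than routine bookkeeping, is verifying that the two regrouping rules are truly inverse to the two blocking rules, i.e. that every $\{1,k\}$-composition starting with the prescribed symbol decomposes \emph{uniquely} into blocks of the required shape. This is exactly where the hypothesis $k\ge 2$ enters: because $1$ and $k$ are distinct symbols, the block-initial markers are separated from the fillers, so the positions of the $1$'s (resp.\ the $k$'s) force the cuts and the decomposition is unambiguous. As an independent consistency check one can verify the counts through generating functions, where the two families have series $(1-x^k)/(1-x-x^k)$ and $(1-x)/(1-x-x^k)$; the common denominator $1-x-x^k$ encodes precisely the recurrence $d_m = d_{m-1}+d_{m-k}$, which reproduces the common value $d_{n-1}$ after the shift by $k-1$.
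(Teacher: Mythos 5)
Your proof is correct. Note, however, that the paper does not actually prove this statement: it is quoted as Munagi's theorem, whose original proof goes through the zigzag graphs of compositions, and the closest the paper itself comes is the combinatorial proof of Theorem~\ref{thm2}, which counts the compositions of $n+k-1$ with parts at least $k$ by length (subtracting $k-1$ from every part gives $\binom{n-1-j(k-1)}{j}$ compositions of length $j+1$) and then observes that dropping the sign $(-1)^j$ recovers Munagi's closed formula for $a_{k,n}$. Your route is genuinely different and self-contained: by translating each side into $\{1,k\}$-compositions --- a part $1+kq$ becoming the block $(1,k^{\,q})$ and a part $\beta\ge k$ becoming the block $(k,1^{\,\beta-k})$ --- you reduce the identity to the trivial fact that deleting the forced first letter identifies both families with $\mathcal{D}_{n-1}$, and you obtain an explicit bijection (swap the leading $1$ for a leading $k$) rather than an equality of two evaluated sums. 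The unique-decomposition issue you flag is indeed the only delicate point, and your argument there is right for $k\ge2$, with $k=1$ correctly handled as a separate trivial case. What your approach buys is an immediate conceptual explanation of the common count $d_{n-1}$ and of the recurrence $d_m=d_{m-1}+d_{m-k}$ behind the Fibonacci-type enumeration mentioned around Theorem~\ref{thm:comp1}; what the paper's formula-based treatment buys instead is the refinement by length, which is exactly what is needed to insert the sign $(-1)^j$ and obtain the Legendre-type results of Theorems~\ref{thm1} and~\ref{thm2}.
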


Now we provide a signed version of Theorem~\ref{thm:comp2} and prove it in two ways, one using a generating function and the other using a bijection.

\begin{theorem}\label{thm2}
For $k,n\ge1$, let $b_{k,n}:=c_{k,n,o}-c_{k,n,e}$, where $c_{k,n,o}$ (or $c_{k,n,e}$, resp.) is the number of odd (or even, resp.) compositions of $n+k-1$ with parts no less than $k$. 
Then 
\[ b_{k,n} = \sum_{0\le j\le (n-1)/k} (-1)^j \binom{n-1-j(k-1)}{j}. \] 
\end{theorem}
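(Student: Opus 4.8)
The plan is to use generating functions that track both the size and the length of a composition, and then reduce the theorem to a one-line recurrence. First I would build the bivariate generating function for compositions all of whose parts are at least $k$: a single part of size $\ge k$ contributes $t\,x^k/(1-x)$ (with $t$ marking length and $x$ marking size), and a composition is an arbitrary sequence of such parts, so
\[
\sum_{\alpha:\ \text{all parts}\ge k} t^{\ell(\alpha)} x^{|\alpha|} = \frac{1}{1 - t\,x^k/(1-x)} = \frac{1-x}{1-x-t\,x^k}.
\]
Since $b_{k,n}=c_{k,n,o}-c_{k,n,e}$ is exactly the signed count $\sum(-1)^{\ell(\alpha)+1}$ over compositions of $n+k-1$ with parts $\ge k$, setting $t=-1$ and negating gives the univariate generating function $f(x):=(x-1)/(1-x+x^k)$, and $b_{k,n}=[x^{\,n+k-1}]\,f(x)$ for all $k,n\ge1$ (the empty composition only affects the constant term, which lies outside the range $n+k-1\ge1$).

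Next I would extract coefficients. Writing $g(x):=1/(1-x+x^k)$ and $A_N:=[x^N]\,g(x)$, the geometric expansion $g(x)=\sum_{m\ge0}(x-x^k)^m=\sum_{m\ge0}x^m(1-x^{k-1})^m$ together with the binomial theorem yields the closed form
\[
A_N=\sum_{0\le j\le N/k}(-1)^j\binom{N-j(k-1)}{j},
\]
which is precisely the right-hand side of the theorem with $N=n-1$. Because $f=(x-1)g$, I have $[x^{\,n+k-1}]f=A_{n+k-2}-A_{n+k-1}$, so the whole theorem reduces to the single identity $A_{n+k-2}-A_{n+k-1}=A_{n-1}$.

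That identity is immediate from the defining relation of $g$: comparing coefficients of $x^N$ in $(1-x+x^k)g(x)=1$ gives the recurrence $A_N=A_{N-1}-A_{N-k}$ for every $N\ge1$ (with $A_i=0$ for $i<0$), and specializing to $N=n+k-1$, where $N-k=n-1$, rearranges to exactly $A_{n+k-2}-A_{n+k-1}=A_{n-1}$. Chaining $b_{k,n}=A_{n+k-2}-A_{n+k-1}=A_{n-1}$ then completes the proof.

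The computation is routine throughout, so the only place that really demands care is bookkeeping at the boundary. I must make sure the empty composition (which contributes $-1$ to the constant term of $f$) is correctly excluded, that the summation range $0\le j\le(n-1)/k$ matches the support of the binomial coefficients under $j\le m$, and that the recurrence $A_N=A_{N-1}-A_{N-k}$ is applied only where $N-k\ge0$ — which holds since $N=n+k-1$ forces $N-k=n-1\ge0$. I expect a bijective proof to be available as well (either by exhibiting a sign-reversing involution that realizes the equivalent recurrence $b_{k,n}=b_{k,n-1}-b_{k,n-k}$, or by matching compositions with parts $\ge k$ directly against the objects enumerated by the alternating binomial sum), but the generating-function route above is the most direct.
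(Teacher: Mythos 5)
Your proof is correct and follows essentially the same route as the paper's analytic proof: both reduce the signed count to the generating function $\frac{1-x}{1-x+x^k}$ and extract coefficients via the geometric series and binomial theorem. The only (cosmetic) difference is that the paper rewrites the fraction as $1 - x^k\cdot\frac{1}{1-x+x^k}$ to read off $b_{k,n}=A_{n-1}$ directly, whereas you compute $A_{n+k-2}-A_{n+k-1}$ and invoke the recurrence $A_N=A_{N-1}-A_{N-k}$ to reach the same conclusion.
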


\begin{proof}[Analytic Proof]
We have
\begin{align*}
1-\sum_{n\ge 1} b_{k,n} x^{n+k-1}
&= \sum_{\ell\ge0} \left( -x^k-x^{k+1}-\cdots \right)^\ell 
=\sum_{\ell\ge0} \frac{(-x^k)^\ell}{(1-x)^{\ell}} \\
&=\left(1-\frac{-x^k}{1-x}\right)^{-1} 
=\frac{1-x}{1-x+x^k} \\
& = 1-x^k\sum_{i\ge0}\left(x-x^k\right)^i \\
&= 1-\sum_{i\ge0}x^{k+i} \sum_{j=0}^i \binom{i}{j} (-x^{k-1})^j.
\end{align*}
For $n\ge1$, extracting the coefficient of $x^{n+k-1}$ gives the desired formula for $b_{k,n}$.
\end{proof}

\begin{proof}[Combinatorial Proof]
There is a bijection from compositions of $n+k-1$ with length $j+1$ and parts no less than $k$ to compositions of $n-j(k-1)$ with length $j+1$ by subtracting $k-1$ from each part of a composition.
There are exactly $\binom{n-1-j(k-1)}{j}$ many compositions of $n-j(k-1)$ with length $j+1$ since each of these compositions can be obtained by inserting $j$ bars between $n-j(k-1)$ dots with no two bars adjacent to each other. 
The desired formula for $b_{k,n}$ follows immediately.
\end{proof}

Taking $k=2$ in Theorem~\ref{thm2} gives Theorem~\ref{thm1}, which is a composition analogue of Legendre's theorem.
The sequence $b_{k,n}$ can also be determined by $b_{k,n}=1$ for $n=1,\ldots,k$ and $d_{k,n}=b_{k,n-1}-b_{k,n-k}$ for $n>k$; see the special cases for $k=2$~\cite[A010892]{OEIS}, 
$k=3$~\cite[A050935]{OEIS},
and $k=4$~\cite[A099530]{OEIS}
in OEIS.
Theorem~\ref{thm2} can be viewed as a signed version of Theorem~\ref{thm:comp2} since by either of the above proofs, we can remove $(-1)^j$ in the formula of $b_{k,n}$ given by Theorem~\ref{thm2} and recover a closed formula by Munagi~\cite{Munagi} for the number $a_{k,n}$ of compositions of $n+k-1$ with parts no less than $k$ (the formula of $a_{k,n}$ can also be found in our earlier work~\cite[Eq.~(1)]{CompRes} and for $k=4$, in OEIS~\cite[A003269]{OEIS}).

\section{A further restriction by congruence on parts}\label{sec:Nyirenda}
In this section, we generalize Theorem~\ref{thm2} by further imposing a congruence condition on the already restricted parts of the compositions.
This is in the spirit of the following extension (slightly rephrased) of Legendre's theorem by Nyirenda~\cite{Nyirenda}.

\begin{theorem}[Nyirenda]\label{thm:Nyirenda}
Let $d_e(n,r)$ (or $d_o(n,r)$, resp.) denote the number of partitions of $n$ into an even (or odd, resp.) number of distinct parts, all of which are congruent to $0$ or $2r\pm1$ modulo $4r$.
Then 
\[ d_e(n,r)-d_o(n,r) = 
\begin{cases}
(-1)^j & \text{if $n=j(2rj\pm1)$ for some integer $j\ge0$}; \\
0 & \text{otherwise}. 
\end{cases} \]
Let $c_e(n,r)$ (or $c_o(n,r)$, resp.) denote the number of partitions of $n$ into an even (or odd, resp.) number of distinct parts, all of which are congruent to $0$ or $\pm r$ modulo $2r+1$.
Then 
\[ c_e(n,r)-c_o(n,r) = 
\begin{cases}
(-1)^j & \text{if $n=j((2r+1)j\pm1))/2$ for some integer $j\ge0$}; \\
0 & \text{otherwise}. 
\end{cases} \]
\end{theorem}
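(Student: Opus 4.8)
The plan is to encode each of the two signed counts as a single infinite product and then recognize that product as a specialization of the \emph{Jacobi triple product}
$$\prod_{k\ge1}(1-q^{2k})(1+q^{2k-1}z)(1+q^{2k-1}z^{-1}) = \sum_{j=-\infty}^{\infty} q^{j^2}z^j.$$
Because the parts are required to be distinct, weighting each admissible partition by $(-1)$ raised to its number of parts makes every allowed part $m$ contribute exactly the factor $(1-x^m)$: using it once gives $-x^m$, omitting it gives $1$. Hence
$$\sum_{n\ge0}\bigl(d_e(n,r)-d_o(n,r)\bigr)x^n = \prod_{m\equiv 0,\,2r\pm1\ (\mathrm{mod}\ 4r)}(1-x^m),$$
and similarly the generating function for $c_e-c_o$ is the product of $(1-x^m)$ over the parts $m$ congruent to $0$ or $\pm r$ modulo $2r+1$. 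The whole argument then reduces to matching these three residue classes against the three factors of the triple product.

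For the first identity I would split the product into $\prod_{k\ge1}(1-x^{4rk})$ together with $\prod_{k\ge0}(1-x^{4rk+2r-1})(1-x^{4rk+2r+1})$, reindex the latter two by $k\mapsto k-1$ so they run over $k\ge1$, and set $q=x^{2r}$ and $z=-x$. Then $q^{2k}=x^{4rk}$ accounts for the first factor, while $1+q^{2k-1}z=1-x^{4rk-2r+1}$ and $1+q^{2k-1}z^{-1}=1-x^{4rk-2r-1}$ account for the other two. The triple product collapses the left side to $\sum_j(-1)^j x^{2rj^2+j}=\sum_j(-1)^j x^{j(2rj+1)}$, and reindexing the negative values of $j$ shows the coefficient of $x^n$ equals $(-1)^j$ precisely when $n=j(2rj\pm1)$ for some $j\ge0$, which is the claimed value.

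The second identity is the same computation with $q=x^{(2r+1)/2}$ and $z=-x^{1/2}$: after the analogous reindexing, the product $\prod_{k\ge1}(1-x^{(2r+1)k})\prod_{k\ge0}(1-x^{(2r+1)k+r})(1-x^{(2r+1)k+r+1})$ matches the three triple-product factors, and the right side becomes $\sum_j(-1)^j x^{j((2r+1)j+1)/2}$, yielding the exponents $j((2r+1)j\pm1)/2$. As a sanity check, $r=1$ permits all residues modulo $3$, so the product is $\prod_{m\ge1}(1-x^m)$ and one recovers Euler's pentagonal number theorem and hence Legendre's Theorem~\ref{thm:Legendre}.

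The main obstacle is bookkeeping rather than depth: the substitutions must be chosen so that the two nonzero residue classes land on the $(1+q^{2k-1}z)$ and $(1+q^{2k-1}z^{-1})$ factors with a \emph{single} consistent value of $z$, and the shift carrying $k\ge0$ to $k\ge1$ must be tracked so that no factor is gained or lost. The half-integer powers of $x$ in the second case are only a formal nuisance and can be avoided by first substituting $x\mapsto x^2$, applying the triple product, and halving the exponents at the end. I would also verify the $j=0$ term, which must contribute coefficient $1$ at $n=0$, consistent with the empty partition having even length.
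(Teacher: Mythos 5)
The paper states this theorem as a quoted result of Nyirenda, citing his article, and gives no proof of its own; so there is nothing in the paper to compare your argument against, and it must be judged on its own terms. On those terms it is correct and complete. The key observation --- that the distinctness of parts together with the $(-1)^{\ell(\lambda)}$ weight turns the signed generating function into $\prod_{m\in S}(1-x^m)$ over the allowed residue classes $S$ --- is right, and your substitutions into the Jacobi triple product check out: with $q=x^{2r}$, $z=-x$ the three factors become $1-x^{4rk}$, $1-x^{4rk-2r+1}=1-x^{4r(k-1)+2r+1}$, and $1-x^{4rk-2r-1}=1-x^{4r(k-1)+2r-1}$ for $k\ge1$, which exactly exhaust the classes $0$, $2r+1$, $2r-1$ modulo $4r$, and the right side $\sum_j(-1)^jx^{2rj^2+j}$ splits over $j\ge0$ and $j<0$ into the exponents $j(2rj\pm1)$ with sign $(-1)^j$. (One can also note $2r(j-i)=-1$ has no integer solutions, so no two exponents collide; this is anyway automatic from the formal identity.) The second case works identically after the substitution $x\mapsto x^2$ that you propose to avoid half-integer powers, and your $r=1$ sanity check correctly recovers the pentagonal number theorem. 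This Jacobi-triple-product specialization is the standard route to such Legendre-type identities and is almost certainly the substance of Nyirenda's own proof.
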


We provide an analogue of Theorem~\ref{thm:Nyirenda} for compositions with two proofs.

\begin{theorem}\label{thm3}
Given integers $k,n\ge1$ and $r>s\ge 0$, let $b_{k,n}^{r,s}:=c_{k,n,o}^{r,s}-c_{k,n,e}^{r,s}$, where $c_{k,n,o}^{r,s}$ (or $c_{k,n,e}^{r,s}$, resp.) is the number of odd (or even, resp.) compositions of $n+k-1$ with parts no less than $k$ and congruent to $k+s$ modulo $r$.
Then
\[ b_{k,n}^{r,s} =
\displaystyle \sum_{ri+j(k+s)=n-1-s} (-1)^j \binom{i+j}{i}. 
\] 
\end{theorem}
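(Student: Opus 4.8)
The plan is to follow the two-proof template of Theorem~\ref{thm2}, adapting both the generating-function manipulation and the stars-and-bars bijection to accommodate the extra congruence condition. The observation that drives everything is that a part $p$ satisfies $p\ge k$ and $p\equiv k+s\pmod r$ if and only if $p\in\{k+s,\,k+s+r,\,k+s+2r,\dots\}$: since $0\le s<r$, the value $k+s$ is the least integer that is at least $k$ and congruent to $k+s$ modulo $r$ (the previous candidate $k+s-r$ is strictly less than $k$). Hence the admissible parts form an arithmetic progression with first term $k+s$ and common difference $r$, whose size generating function is $P(x)=x^{k+s}/(1-x^r)$.

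For the analytic proof, I would assemble the signed generating function by length. A composition with exactly $\ell$ admissible parts contributes $(-1)^{\ell+1}$ to $c_{k,n,o}^{r,s}-c_{k,n,e}^{r,s}$, so
\[
\sum_{n\ge1}b_{k,n}^{r,s}\,x^{n+k-1}=\sum_{\ell\ge1}(-1)^{\ell+1}P(x)^\ell=\frac{P(x)}{1+P(x)}=\frac{x^{k+s}}{1-x^r+x^{k+s}}.
\]
I would then write the denominator as $1-(x^r-x^{k+s})$, expand the resulting geometric series as $\sum_{m\ge0}(x^r-x^{k+s})^m$, apply the binomial theorem to each summand, and reindex by $i=m-j$, where $j$ counts the factors $-x^{k+s}$. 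This turns the series into $\sum_{i,j\ge0}(-1)^j\binom{i+j}{i}x^{ri+(k+s)j}$; multiplying by the prefactor $x^{k+s}$ and extracting the coefficient of $x^{n+k-1}$ forces $ri+(k+s)j=n-1-s$ and yields exactly the claimed formula.

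For the combinatorial proof, I would argue length by length, setting $\ell=j+1$. Subtracting $k+s$ from every part of an admissible composition of $n+k-1$ with $j+1$ parts leaves a sequence of nonnegative multiples of $r$; dividing each entry by $r$ produces a weak composition $(\gamma_1,\dots,\gamma_{j+1})$ of $i:=\bigl(n-1-s-(k+s)j\bigr)/r$ into $j+1$ nonnegative parts, and this map is reversible. The number of such weak compositions is $\binom{i+j}{i}$ by stars and bars, and the parity of the length $j+1$ contributes the sign $(-1)^{(j+1)+1}=(-1)^j$; summing over $j$ reproduces the formula, with the index condition $ri+(k+s)j=n-1-s$ recording the admissible pairs $(i,j)$.

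The routine part is the algebra and the stars-and-bars count; the one point demanding care is the bookkeeping of the two shifts. I expect the main obstacle to be pinning down the index set precisely: I must verify that subtracting $k+s$ per part, rather than $k-1$ as in Theorem~\ref{thm2}, correctly absorbs both the lower bound $k$ and the congruence, and that the resulting exponent shift $(k+s)(j+1)=(k+s)j+(k+s)$ lands on the stated condition $ri+(k+s)j=n-1-s$ rather than on $n+k-1$. Confirming that the $\ell=0$ term contributes nothing for $n\ge1$ and that the geometric series converges as a formal power series, since $P(x)$ has no constant term, completes the argument.
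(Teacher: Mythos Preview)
Your proposal is correct and follows essentially the same two-proof template as the paper: the analytic proof identifies the signed generating function as $x^{k+s}/(1-x^r+x^{k+s})$ (equivalently, the paper writes $1-\sum b_{k,n}^{r,s}x^{n+k-1}=(1-x^r)/(1-x^r+x^{k+s})$), expands $1/(1-(x^r-x^{k+s}))$ as a geometric series, and reindexes; the combinatorial proof is the same stars-and-bars bijection, the only cosmetic difference being that the paper subtracts $k+s-r$ from each part and divides by $r$ to obtain ordinary compositions of $i+j+1$ into $j+1$ positive parts, whereas you subtract $k+s$ to obtain weak compositions of $i$ into $j+1$ nonnegative parts---the counts $\binom{i+j}{j}$ agree either way.
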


\begin{proof}[Analytic Proof]
We have
\begin{align*}
1 - \sum_{n\ge1} b_{k,n}^{r,s} x^{n+k-1}
&= \sum_{\ell\ge0} \left( -x^{k+s}-x^{k+r+s}-x^{k+2r+s}+ \cdots \right)^\ell \\
& =\sum_{\ell\ge0} \frac{(-x^{k+s})^\ell}{(1-x^r)^{\ell}} 
=\left(1-\frac{-x^{k+s}}{1-x^r}\right)^{-1} \\
&=\frac{1-x^r}{1-x^r+x^{k+s}} 
= 1-x^{k+s}\sum_{i\ge0}\left(x^r-x^{k+s}\right)^i \\
&= 
1-\sum_{i\ge0}x^{k+ri+s} \sum_{j=0}^i (-1)^j \binom{i}{j} x^{j(k+s-r)}. 
\end{align*}
Extracting the coefficient of $x^{n+k-1}$ and replacing $i$ with $i+j$ gives the desired formula for $b_{k,n}^{r,s}$.
\end{proof}

\begin{remark}\label{rem}
The above proof is valid even though the exponent of $x^{j(k+s-r)}$ could be zero (when $k=r-s$) or negative (when $k<r-s$).
Alternatively, one can deal with the cases $k=r-s$ and $k>r-s$ separately using similar techniques and obtain the same formula for $b_{k,n}^{r,s}$.
\end{remark}

\begin{proof}[Combinatorial Proof]

The allowed parts (no less than $k$ and congruent to $k+s$ modulo $r$) are $k+s, k+r+s, k+2r+s, \ldots$.
Dividing each part minus $k+s-r$ by $r$ gives a bijection from compositions of $n+k-1$ with exactly $j+1$ parts, each less than $k$ and congruent to $k+s$, to compositions of $i+j+1$ of length $j+1$, where $r(i+j+1)+(k+s-r)(j+1) = n+k-1$, i.e., $ri+j(k+s)=n-1-s$.
The number of compositions of $i+j+1$ of length $j+1$ is $\binom{i+j}{j}$.
Therefore the desired formula for $b_{k,n}^{r,s}$ holds.
\end{proof}

Theorem~\ref{thm3} recovers Theorem~\ref{thm2} when $(r,s)=(1,0)$. 
The following is another special case as mentioned in Remark~\ref{rem}.

\begin{corollary}
Suppose $r>s\ge0$ and $k=r-s$.
Then $b_{k,n}^{r,s}=1$ if $n=s+1$ and $b_{k,n}^{r,s}=0$ if $n\ne s+1$. 
\end{corollary}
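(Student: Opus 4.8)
The plan is to specialize the formula of Theorem~\ref{thm3} to the case $k = r - s$, where the crucial simplification is that $k + s = r$. The cleanest route is through the generating function already computed in the analytic proof of Theorem~\ref{thm3}. That proof establishes the identity
\[ 1 - \sum_{n\ge1} b_{k,n}^{r,s}\, x^{n+k-1} = \frac{1-x^r}{1-x^r+x^{k+s}}, \]
and substituting $k+s = r$ collapses the denominator to $1-x^r+x^r = 1$, leaving simply $1 - x^r$. Hence $\sum_{n\ge1} b_{k,n}^{r,s}\, x^{n+k-1} = x^r$, and since $n+k-1 = n + r - s - 1$, extracting coefficients shows that the unique nonzero term occurs when $n + r - s - 1 = r$, i.e.\ when $n = s+1$, with value $1$; all other coefficients vanish.

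Alternatively, one can argue directly from the closed form, as anticipated in Remark~\ref{rem}. With $k + s = r$ the indexing condition $ri + j(k+s) = n-1-s$ becomes $r(i+j) = n-1-s$, so solutions exist only when $r \mid (n-1-s)$ and $n \ge s+1$. Writing $m := (n-1-s)/r$, the sum reduces to
\[ b_{k,n}^{r,s} = \sum_{i+j=m} (-1)^j \binom{i+j}{i} = \sum_{j=0}^m (-1)^j \binom{m}{j} = (1-1)^m, \]
which vanishes for $m \ge 1$ and equals $1$ for $m = 0$, i.e.\ for $n = s+1$, recovering the same conclusion.

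There is essentially no obstacle here: the entire content is the observation that setting $k = r - s$ forces the part-size offset $k + s$ to coincide with the congruence modulus $r$, so that the two competing contributions in the generating function cancel exactly. The only point requiring a moment of care is the case analysis in the combinatorial route (whether $r \mid (n-1-s)$ and whether $n \ge s+1$), which the generating-function route sidesteps entirely; I would therefore present the generating-function collapse as the main argument and mention the binomial evaluation $(1-1)^m$ only as a confirming remark.
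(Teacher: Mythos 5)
Your proposal is correct and matches the paper's own proof, which likewise observes that when $k=r-s$ the generating function collapses to $1-x^r$ and also notes that the result can alternatively be read off from the closed formula in Theorem~\ref{thm3}. You simply spell out both routes in slightly more detail than the paper does.
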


\begin{proof}
This can be derived from the formula of $b_{k,n}^{r,s}$ in Theorem~\ref{thm3} or by using the generating function 
\[ \frac{1-x^r}{1-x^r+x^{k+s}} = 1-x^r \]
in the proof of Theorem~\ref{thm3} when $k=r-s$.
\end{proof}

We give one more corollary of Theorem~\ref{thm3} below, which resembles Legendre's theorem and recovers Theorem~\ref{thm1} when $(k,r,s)=(2,1,0)$. 

\begin{corollary}\label{cor:period}
Given integers $r>s\ge0$ and $k=2r-s$, we have $d_{k,n}^{r,s} = (-1)^j$ if $n\in\{3rj+s+1,3rj+r+s+1\}$ for some integer $j\ge0$ or $d_{k,n}^{r,s} =0$ otherwise.
\end{corollary}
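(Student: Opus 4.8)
The plan is to read the periodic formula directly off the generating function established in the analytic proof of Theorem~\ref{thm3}. The first point to settle is the relationship between the quantity $d_{k,n}^{r,s}$ that appears in the statement and the quantity $b_{k,n}^{r,s}$ of Theorem~\ref{thm3}. These are the same number: $d_{k,n}^{r,s}$ denotes the signed difference $c_{k,n,o}^{r,s}-c_{k,n,e}^{r,s}=b_{k,n}^{r,s}$ already introduced in Theorem~\ref{thm3}, the letter $d$ being used in accordance with the recurrence $d_{k,n}=b_{k,n-1}-b_{k,n-k}$ recorded in Section~\ref{sec:Munagi}. I would state this identification explicitly at the outset, so that the formula of Theorem~\ref{thm3}, and in particular the generating function derived in its analytic proof, is available for $d_{k,n}^{r,s}$.

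Next I would specialize to the hypothesis $k=2r-s$ of the corollary. From the analytic proof of Theorem~\ref{thm3} we have
\[ \sum_{n\ge1} d_{k,n}^{r,s}\,x^{n+k-1} = 1-\frac{1-x^r}{1-x^r+x^{k+s}} = \frac{x^{k+s}}{1-x^r+x^{k+s}}. \]
Since $k=2r-s$ forces $k+s=2r$, the right-hand side becomes $\frac{x^{2r}}{1-x^r+x^{2r}}$. The key algebraic step is the factorization $(1+x^r)(1-x^r+x^{2r})=1+x^{3r}$, which rewrites the series as
\[ \frac{x^{2r}(1+x^r)}{1+x^{3r}} = \big(x^{2r}+x^{3r}\big)\sum_{m\ge0}(-1)^m x^{3rm} = \sum_{m\ge0}(-1)^m\big(x^{3rm+2r}+x^{3rm+3r}\big). \]

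Finally I would extract the coefficient of $x^{n+k-1}=x^{n+2r-s-1}$. Matching exponents, $d_{k,n}^{r,s}=(-1)^m$ precisely when $n+2r-s-1$ equals $3rm+2r$ or $3rm+3r$, that is, when $n=3rm+s+1$ or $n=3rm+r+s+1$, and $d_{k,n}^{r,s}=0$ otherwise; renaming $m$ as $j$ yields exactly the claimed formula, and setting $(k,r,s)=(2,1,0)$ recovers Theorem~\ref{thm1}. I do not anticipate a serious obstacle here. The two points requiring care are the cyclotomic-type factorization $1-x^r+x^{2r}=(1+x^{3r})/(1+x^r)$ (equivalently, that $1-y+y^2$ divides $1+y^3$), and the bookkeeping that sorts the two exponent families $3rm+2r$ and $3rm+3r$ into their residue classes modulo $3r$ while tracking the alternating sign $(-1)^m$, which is what produces the Legendre-type periodicity in $n$.
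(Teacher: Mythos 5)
Your proposal is correct and follows essentially the same route as the paper: both specialize the generating function $\frac{1-x^r}{1-x^r+x^{k+s}}$ from the analytic proof of Theorem~\ref{thm3} to $k+s=2r$, multiply numerator and denominator by $1+x^r$ to obtain denominator $1+x^{3r}$, and expand as an alternating geometric series. The only (cosmetic) difference is that you move the constant term across first and factor $x^{2r}(1+x^r)/(1+x^{3r})$, which avoids the paper's reindexing step of replacing $i$ with $j+1$; your identification of $d_{k,n}^{r,s}$ with $b_{k,n}^{r,s}$ is also the correct reading of the paper's notation.
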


\begin{proof}
Suppose $k=2r-s$. 
It follows from the proof of Theorem~\ref{thm3} that
\begin{align*}
1-\sum_{n\ge1} b_{k,n}^{r,s} x^{n+k-1}
&= \frac{1-x^r}{1-x^r+x^{2r}} 
= \frac{1-x^{2r}}{1+x^{3r}} \\
& = \sum_{i\ge0} (-1)^i x^{3ri} - \sum_{j\ge0} (-1)^j x^{2r+3rj}. 
\end{align*}
Extracting the coefficient of $x^{n+k-1}$ after replacing $i$ with $j+1$ for $i\ge1$ gives the desired formula for $d_{k,n}^{r,s}$.
\end{proof}

By Corollary~\ref{cor:period}, if $k=2r-s$ then the sequence $(d_{k,n}^{r,s}: n\ge1)$ has period $6r$ and, upon a backward shift of $r$ terms, its generating function becomes $(1-x^r+x^{2r})^{-1}$.
This generating function is the inverse of the $6r$th cyclotomic polynomial at least when $k+s=2r=4, 6, 8$; see OEIS~\cite[A014021, A014027, A014033]{OEIS}.

\section{A relaxation for restricted parts}\label{sec:Huang}

The following result of Franklin recovers Glaisher's theorem (Theorem~\ref{thm:Glaisher}) when $m=0$.

\begin{theorem}[Franklin]\label{thm:Franklin}
Given integers $k\ge1$ and $m\ge0$, the number of partitions of $n$ with $m$ distinct parts each occurring $k$ or more times equals the number of partitions of $n$ with exactly $m$ distinct parts divisible by $k$.
\end{theorem}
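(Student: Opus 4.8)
The plan is to prove Franklin's theorem by constructing an explicit bijection between the two sets of partitions, in the spirit of the classical Glaisher bijection (which is the $m=0$ case). First I would set up notation: given a partition $\lambda$ of $n$, write each part value $v$ with its multiplicity $m_v \ge 1$, so $\lambda$ is determined by the data $\{(v, m_v)\}$. On the ``source'' side I am counting partitions in which the multiset of part-values $v$ for which $m_v \ge k$ has exactly $m$ distinct elements; on the ``target'' side I am counting partitions in which exactly $m$ distinct part-values are divisible by $k$. The strategy is to transform the multiplicity information in base $k$.

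The key step is the following map, a refinement of Glaisher's. For each part value $v$ occurring with multiplicity $m_v$, write $m_v = q_v k + s_v$ with $0 \le s_v < k$, i.e.\ divide the multiplicity by $k$. I would send the $q_v k$ copies of $v$ (the ``removable'' copies) to $q_v$ copies of the part $kv$, while retaining the $s_v$ copies of $v$. Doing this simultaneously for all $v$ and collecting parts produces a new partition $\mu$ of the same size $n$. The central claim, which I would verify carefully, is that this map is a bijection from all partitions of $n$ onto all partitions of $n$, and that it carries the source statistic to the target statistic. Concretely, I expect to show that a part value $v$ contributes to the count of ``distinct values with multiplicity $\ge k$'' in $\lambda$ precisely when $q_v \ge 1$, and that after the map this same data manifests as a distinct part divisible by $k$ in $\mu$. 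The inverse map takes each part of $\mu$ divisible by $k$, peels off its $k$-divisible portion, and repeatedly divides by $k$ and multiplies the multiplicity by $k$ until no part is divisible by $k$, reconstructing the $q_v$ data.

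The main obstacle, and the step I would treat with the most care, is proving that the statistics correspond exactly, since the naive Glaisher bijection tracks total parity of multiplicities rather than the number of \emph{distinct} values involved. The subtlety is that several source values $v, v', \ldots$ could, after multiplication by powers of $k$, collide into the same target value; I must check that the count of distinct target values divisible by $k$ equals the count of distinct source values with multiplicity $\ge k$, with no collisions spoiling the bijection. I would resolve this by organizing each partition into ``chains'' of values related by multiplication by $k$ (i.e.\ grouping $v, kv, k^2 v, \ldots$ for each $v$ not divisible by $k$) and analyzing the map chain by chain, showing it acts independently on each chain via a base-$k$ carrying rule and preserves the count of nonzero digits in the appropriate position. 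Once the per-chain correspondence is established, summing over chains yields that exactly $m$ distinct values have multiplicity $\ge k$ on the source side if and only if exactly $m$ distinct values are divisible by $k$ on the target side, completing the proof.
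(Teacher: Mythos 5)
The paper does not actually prove Theorem~\ref{thm:Franklin}; it is quoted as a classical result of Franklin, so your argument has to stand on its own. As written it does not: the ``central claim'' that the single division step $m_v=q_vk+s_v$, sending $q_vk$ copies of $v$ to $q_v$ copies of $kv$, is a bijection from all partitions of $n$ onto all partitions of $n$ is false. Take $k=2$, $n=2$: the partition $(1,1)$ has $m_1=2$, so it maps to $(2)$, while the partition $(2)$ has $m_2=1$, so it is fixed and also maps to $(2)$ --- the map is not injective. The statistic transfer fails as well: $(2)$ has no part of multiplicity $\ge 2$ (source statistic $0$), yet its image $(2)$ has one distinct part divisible by $2$ (target statistic $1$). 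The root of the problem is that parts of $\lambda$ that are already divisible by $k$ but occur fewer than $k$ times produce parts of $\mu$ divisible by $k$ without contributing to the source count, and collide with the parts $kv$ you create. Your proposed inverse (repeatedly divide parts by $k$ while multiplying multiplicities by $k$) is the inverse of the \emph{fully iterated} Glaisher map, whose image consists only of partitions with every multiplicity less than $k$; it cannot invert a purported bijection of all partitions of $n$ onto all partitions of $n$.

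Your instinct to decompose part values into chains $w,kw,k^2w,\ldots$ (with $k\nmid w$) is the right framework, and the per-chain statement you would need is true: over all sequences $(m_0,m_1,\ldots)$ with fixed $\sum_i m_ik^i$, the statistics $\#\{i: m_i\ge k\}$ and $\#\{i\ge 1: m_i\ge 1\}$ are equidistributed. But you never construct the per-chain bijection realizing this, and the one concrete map you do write down is not it, so the proof has a genuine hole exactly at the step you identified as the main obstacle. If a bijection is not required, the cleanest repair is a two-line generating function argument: conditioning on each part value $j$ shows that both counts equal
\[ [t^m]\prod_{j\ge1}\frac{1-(1-t)x^{kj}}{1-x^j}, \]
since on the multiplicity side each $j$ contributes $\bigl(1+x^j+\cdots+x^{(k-1)j}\bigr)+t\,x^{kj}/(1-x^j)$, and on the divisibility side the factors $\prod_{k\nmid j}(1-x^j)^{-1}\prod_{i\ge1}\bigl(1+t\,x^{ki}/(1-x^{ki})\bigr)$ rearrange to the same product.
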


In our recent work~\cite{CompRes}, we obtained a composition analogue of Franklin's theorem.

\begin{theorem}[Huang]\label{thm:comp3}
For any integers $k\ge1$ and $m\ge0$, the number of compositions of $n$ with exactly $m$ parts not congruent to $1$ modulo $k$, each of which is greater than $k$, equals the number of compositions of $n+k-1$ with exactly $m$ parts less than $k$, each of which is preceded by a part at least $k$ and followed by either the last part or a part greater than $k$.
\end{theorem}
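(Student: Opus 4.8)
The plan is to prove the identity by comparing generating functions that track the statistic $m$ with a marker $t$, in the same spirit as the analytic proofs of Theorems~\ref{thm2} and~\ref{thm3} but with a more elaborate transfer matrix on the right-hand side. Write $L(x,t)=\sum_{n,m}A_{n,m}\,x^nt^m$ for the left-hand count and $G(x,t)=\sum_{N,m}B_{N,m}\,x^Nt^m$ for the right-hand count, where $A_{n,m}$ and $B_{N,m}$ are the two quantities in the statement. The goal is the single coefficient identity $A_{n,m}=B_{n+k-1,m}$ for all $n\ge1$ and $m\ge0$.

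First I would compute $L$. On the left, every allowed part is either congruent to $1$ modulo $k$ (values $1,k+1,2k+1,\dots$, untracked) or \emph{exceptional}, meaning $>k$ and not congruent to $1$ (tracked by $t$); parts $2,\dots,k$ are forbidden. Summing the single-part generating function and inverting gives the rational function $L=\dfrac{(1-x^k)(1-x)}{D}$, where $D=(1-x)(1-x-x^k)-t\,x^{k+2}(1-x^{k-1})$. Setting $t=0$ collapses this to the left-hand generating function of Munagi's Theorem~\ref{thm:comp2}, a useful sanity check.

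The heart of the argument, and the step I expect to be the main obstacle, is computing $G$, because the right-hand condition is contextual: each small part (value $<k$) must be immediately preceded by a part $\ge k$ and immediately followed either by a part $>k$ or by a part that is the last one. I would first unwind this into local rules: a small part is never last; two small parts are never adjacent; and the successor of a small part is either $>k$, or equals $k$ and terminates the composition (a successor $<k$ cannot occur, as it would itself be a small part without a large predecessor). I would then set up a transfer matrix whose states record the type of the previous part --- small, medium ($=k$), or large ($>k$) --- the delicate points being that the transition ``small $\to$ medium'' is allowed only as a terminating move and that small parts carry the marker $t$. Solving the resulting linear system yields $G=\dfrac{(1-x)^2-t\,x^{k+2}(1-x^{k-1})^2}{D}$ with the \emph{same} denominator $D$, and again $t=0$ recovers the right-hand side of Munagi's theorem.

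The final step is then a short algebraic collapse rather than a calculation: since $L$ and $G$ share the denominator $D$, one checks directly that $G-x^{k-1}(L-1)=1$, equivalently $G=1-x^{k-1}+x^{k-1}L$. Comparing coefficients of $x^{n+k-1}t^m$ on both sides (the terms $1-x^{k-1}$ are invisible once $n\ge1$) gives $B_{n+k-1,m}=A_{n,m}$, which is the theorem. I would close by noting that the whole scheme specializes at $m=0$ to Munagi's Theorem~\ref{thm:comp2}, and that a bijective proof should also be possible by grafting, onto Munagi's size-shifting bijection, a local gadget that converts each exceptional left-hand part into one contextually constrained small part on the right; the contextual nature of the right-hand condition is exactly what makes such a bijection fiddly to state cleanly, which is why I would present the generating-function argument first.
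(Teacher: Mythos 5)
Your proposal is correct, but it takes a genuinely different route from the paper's. The paper does not reprove Theorem~\ref{thm:comp3} here at all: it imports the statement from \cite{CompRes}, where the proof is bijective, built on Sills' bijection for Theorem~\ref{thm:comp1}; that explicit bijection is precisely what the present paper reuses to track lengths in the proof of Theorem~\ref{thm4}. Your argument is instead purely analytic, and I verified its key claims. Your unwinding of the contextual condition is the correct reading (a small part is never last, two small parts are never adjacent, and the successor of a small part is either $>k$ or equals $k$ and terminates the composition; this agrees with the paper's formula, e.g.\ $a_{2,4}^{(1)}=1$ counts only $(2,1,2)$ among compositions of $5$). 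The transfer matrix with states small/medium/large and the terminating-only transition from small to medium does yield $G=\bigl((1-x)^2-tx^{k+2}(1-x^{k-1})^2\bigr)/D$ over the common denominator $D=(1-x)(1-x-x^k)-tx^{k+2}(1-x^{k-1})$, and the collapse $G=1-x^{k-1}+x^{k-1}L$ reduces, after clearing $D$, to the matching of the $t$-linear parts together with the elementary identity $(1-x^{k-1})(1-x-x^k)+x^{k-1}(1-x^k)=1-x$. What the generating-function route buys is a uniform, mechanical verification that would also deliver the length-signed refinement of Theorem~\ref{thm4} directly by inserting a length marker into the same transfer matrix; what it gives up is the explicit correspondence between the two families of compositions, which the bijective proof provides and which the paper exploits elsewhere. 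The one real gap in your write-up is presentational: the formula for $G$ is asserted rather than derived, and since that linear system is the crux of the argument it should be displayed in full.
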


Theorem~\ref{thm:comp3} recovers Munagi's theorem when $m=0$.
Our proof for Theorem~\ref{thm:comp3} was based on the bijective proof of Theorem~\ref{thm:comp1} by Sills~\cite{Sills}.
We also established two closed formulae~\cite[Theorem~1.7]{CompRes} for the two equal numbers in Theorem~\ref{thm:comp3}:
\begin{align*}
a_{k,n}^{(m)} 
&= \sum_{ \substack{ \lambda\subseteq (k-2)^m \\ i+(k+1)m+jk+ |\lambda| = n }} \binom{i}{m} \binom{i+j-1}{j} m_\lambda(1^m) \\
&= \sum_{i+(k+1)m+jk+\ell(k-1)+h=n} (-1)^\ell \binom{i}{m} \binom{i+j-1}{j} \binom{m}{\ell} \binom{m+h-1}{h}.
\end{align*}
Here $\lambda\subseteq r^d$ means $\lambda$ is a partition with no more than $d$ parts, each at most $r$,
and $m_\lambda(1^d)$ is the specialization of the monomial symmetric function indexed by the partition $\lambda$ evaluated at the vector $(\underbrace{1,\ldots,1}_d)$, i.e., with $m_i$ denoting the number of parts of the partition $\lambda\subseteq r^d$ that are equal to $i$ for $i=0,1,\ldots,r$, 
\[ m_\lambda(1^d) = \binom{m}{m_0,\ldots,m_r} = \frac{m!}{m_0!\cdots m_r!}. \]

Now we provide a signed version of the above formulae of the number $a_{k,n}^{(m)}$.

\begin{theorem}\label{thm4}
Let $b_{k,n}^{(m)}:=c_{k,n,o}^{(m)}-c_{k,n,e}^{(m)}$, where $c_{k,n,o}^{(m)}$ (or $c_{k,n,e}^{(m)}$, resp.) is the numbers of odd (or even, resp.) compositions of $n+k-1$ with exactly $m$ parts less than $k$, each of which is preceded by a part at least $k$ and followed by either the last part or a part greater than $k$.
Then
\begin{align*}
b_{k,n}^{(m)} 
&= \sum_{ \substack{ \lambda\subseteq (k-2)^m \\ i+(k+1)m+jk+ |\lambda| = n }} (-1)^{j} \binom{i}{m} \binom{i+j-1}{j} m_\lambda(1^m) \\
&= \sum_{i+(k+1)m+jk+\ell(k-1)+h=n} (-1)^{\ell+j} \binom{i}{m} \binom{i+j-1}{j} \binom{m}{\ell} \binom{m+h-1}{h}.
\end{align*}
\end{theorem}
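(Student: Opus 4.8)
The plan is to establish Theorem~\ref{thm4} analytically, in the spirit of the generating-function proofs of Theorems~\ref{thm2} and~\ref{thm3}, by re-deriving the generating function behind the formula of~\cite[Theorem~1.7]{CompRes} for $a_{k,n}^{(m)}$ with an extra sign recording the parity of the length. The underlying observation is that $b_{k,n}^{(m)}=c_{k,n,o}^{(m)}-c_{k,n,e}^{(m)}$ is the same sum as $a_{k,n}^{(m)}$ but with each composition $\alpha$ weighted by $(-1)^{\ell(\alpha)}$, up to an overall sign.

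First I would set up the generating function from the structure of the relevant compositions. Deleting the $m$ parts less than $k$ from such a composition of $n+k-1$ leaves a skeleton of parts at least $k$; the conditions then say that each small part occupies an interior gap of this skeleton, no two small parts are adjacent, and a small part in a non-final interior gap forces the next skeleton part to exceed $k$, while a small part in the final interior gap imposes no such constraint (its successor is the last part). Writing $A=x^{k}/(1-x)$ for an unconstrained large part, $B=x^{k+1}/(1-x)$ for a large part forced to exceed $k$, $C=(x-x^{k})/(1-x)$ for a small part, and marking small parts by $y$, this decomposition gives
\[ G(x,y)=A+\frac{A^{2}(1+yC)}{1-(A+yCB)}, \]
and extracting the coefficient of $y^{m}$ and substituting the explicit $A,B,C$ reproduces the generating function packaging the formula of~\cite[Theorem~1.7]{CompRes} for $a_{k,n}^{(m)}$.

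Next I would insert the length sign. Since the length of such a composition is $m$ plus its number of large parts, the weight $(-1)^{\ell(\alpha)}$ equals the fixed constant $(-1)^{m}$ times $(-1)^{(\#\text{large parts})}$; the latter is realized by the substitution $A\mapsto -A$, $B\mapsto -B$ in $G(x,y)$, with $C$ left unchanged. After accounting for the resulting constants, the net effect on the closed form is to turn the large-part factor $x^{m}(1-x^{k})/(1-x-x^{k})^{m+1}$ into $x^{m}(1+x^{k})/(1-x+x^{k})^{m+1}$, the small-part factor $\left((1-x^{k-1})/(1-x)\right)^{m}$ being unchanged. Extracting coefficients then proceeds as in Theorem~\ref{thm2}: the expansion $(1+x^{k})^{-i}=\sum_{j}\binom{i+j-1}{j}(-x^{k})^{j}$ supplies the sign $(-1)^{j}$ and yields the first formula, and expanding $\left((1-x^{k-1})/(1-x)\right)^{m}=(1-x^{k-1})^{m}(1-x)^{-m}$ by the binomial theorem introduces $(-1)^{\ell}\binom{m}{\ell}$ and $\binom{m+h-1}{h}$ and yields the second formula.

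The step I expect to be the main obstacle is the sign bookkeeping: one must confirm that the fixed factor $(-1)^{m}$, the overall sign converting $\sum_{\alpha}(-1)^{\ell(\alpha)}$ into $c_{o}-c_{e}$, and the sign coming from the $B^{m-1}$ contribution combine to leave precisely $(-1)^{j}$ (respectively $(-1)^{\ell+j}$), with no residual global sign --- this is what makes the normalized ``odd minus even'' difference come out clean. A secondary care point is correctly modeling the forward-looking adjacency condition and its exception at the final gap, which is responsible for the last large part being unconstrained (and hence for the factor $A^{2}$ rather than $A$ in $G$). As an alternative, a purely combinatorial proof in the style of the bijective arguments for Theorems~\ref{thm2} and~\ref{thm3} should also work, tracking the parity of $\ell(\alpha)$ through the bijection of~\cite{CompRes} and identifying $j$ with the number of constrained large parts.
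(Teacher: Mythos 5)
Your proposal is correct, but it proves the theorem by a genuinely different route than the paper. The paper's proof is a short combinatorial argument: it pushes the combinatorial proof of the formulae for $a_{k,n}^{(m)}$ from \cite[Theorem~1.7]{CompRes} through the bijection of \cite[Theorem~1.6]{CompRes} to conclude that every composition counted in the term indexed by $j$ has length exactly $2m+j+1$, so each contributes $(-1)^{\ell(\alpha)+1}=(-1)^{j}$ to $c_{k,n,o}^{(m)}-c_{k,n,e}^{(m)}$ and the sign $(-1)^{j}$ can simply be inserted into the known unsigned formulae. You instead work directly with the second kind of compositions, build their bivariate generating function from the skeleton decomposition, and insert the sign analytically. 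I checked your setup: the decomposition $G(x,y)=A+A^{2}(1+yC)/\bigl(1-(A+yCB)\bigr)$ is right (the last part is necessarily a large part, no two small parts are adjacent, and only the final interior gap is exempt from the ``greater than $k$'' constraint), and extracting $[y^{m}]$ gives $A^{2}C^{m}B^{m-1}(B+1-A)/(1-A)^{m+1}$ with the key simplification $B+1-A=1-x^{k}$, reproducing the generating function $x^{(k+2)m}(1-x^{k-1})^{m}(1-x^{k})/\bigl((1-x)^{m}(1-x-x^{k})^{m+1}\bigr)$ of the unsigned formulae. Your sign bookkeeping also works out exactly as you anticipate: writing $\ell(\alpha)=m+t$ with $t$ the number of large parts (so $t=m+j+1$, matching the paper's $2m+j+1$), the substitution $A\mapsto-A$, $B\mapsto-B$ produces a factor $(-1)^{m-1}$ from $B^{m-1}$ and turns $B+1-A$ into $1+x^{k}$, and this cancels against the overall $(-1)^{m+1}$ needed to convert $\sum_{\alpha}(-1)^{\ell(\alpha)}$ into odd-minus-even, leaving only the $(-1)^{j}$ from expanding $(1+x^{k})^{-i}$. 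The trade-off: the paper's proof is shorter but leans entirely on the bijection and proofs of \cite{CompRes}, whereas yours is self-contained, makes the sign cancellation explicit, and yields as a byproduct the closed rational generating function $x^{(k+2)m}(1-x^{k-1})^{m}(1+x^{k})/\bigl((1-x)^{m}(1-x+x^{k})^{m+1}\bigr)$ for $\sum_{n}b_{k,n}^{(m)}x^{n}$, which is not stated in the paper and is of independent interest.
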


\begin{proof}
Both the analytic and combinatorial proofs of the above formulae of $a_{k,n}^{(m)}$ given in our previous work~\cite[Theorem~1.7]{CompRes} were based on the interpretation of $a_{k,n}^{(m)}$ as the number of the first kind of compositions in Theorem~\ref{thm:comp3}.
However, we can apply the bijective proof of Theorem~\ref{thm:comp3}~\cite[Theorem~1.6]{CompRes} to the combinatorial proof of the above formulae of $a_{k,n}^{(m)}$ and obtain that the length of each of the second kind of compositions in Theorem~\ref{thm:comp3} is given by $2m+j+1$, where $m$ and $j$ are as in the above formulae of $a_{k,n}^{(m)}$.
Thus we have the desired formulae for $b_{k,n}^{(m)}$.
\end{proof}

Theorem~\ref{thm4} provides a new interpretation for a known sequence~\cite[A281862]{OEIS}, which coincides with $b_{k,n}^{(m)}$ when $k=m=2$.
We also have a variation of $b_{k,n}^{(m)}$ with simplified restrictions on the parts of a composition.

\begin{theorem}\label{thm4'}
Let $\bar b_{k,n}^{(m)} := \bar c_{k,n,o}^{(m)} - \bar c_{k,n,e}^{(m)}$, where $\bar c_{k,n,o}^{(m)}$ (or $\bar c_{k,n,e}^{(m)}$, resp.) is the number of odd (or even, resp.) compositions of $n+k-1$ with exactly $m$ parts less than $k$.
Then
\[ \bar b_{k,n}^{(m)} 
= \sum_{i+j+(k-1)(\ell+i-m-1)=n} (-1)^{i+\ell+1} \binom{i+j-1}{j} \binom{i}{m} \binom{m}{\ell}
\]
\end{theorem}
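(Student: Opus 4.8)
The plan is to prove the statement analytically, in the same spirit as the generating-function arguments for Theorems~\ref{thm2} and~\ref{thm3}, but now tracking \emph{two} statistics at once: the length parity and the number of parts less than $k$. Let $s(\alpha)$ denote the number of parts of a composition $\alpha$ that are smaller than $k$, and form the bivariate series $F(x,y) := \sum_{\alpha}(-1)^{\ell(\alpha)}y^{s(\alpha)}x^{|\alpha|}$, the sum ranging over all compositions (including the empty one). A composition of size $N$ with $s(\alpha)=m$ contributes $(-1)^{\ell(\alpha)}$ to $[y^m x^{N}]F$, so $[y^m x^{N}]F=\bar c_{k,n,e}^{(m)}-\bar c_{k,n,o}^{(m)}$ with $N=n+k-1$, giving $\bar b_{k,n}^{(m)} = -[y^m x^{n+k-1}]F$ for every $n\ge1$. (I would state this as coefficient extraction rather than a closed identity of the form $1-\sum\bar b_{k,n}^{(m)}y^mx^{n+k-1}=F$, since the small compositions of size below $k$ correspond to $n\le0$ and must not be swept into the clean identity.)

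Next I would compute $F$ in closed form. A single part contributes $-\frac{x^k}{1-x}$ if it is at least $k$ and $-y\,\frac{x-x^k}{1-x}$ if it is smaller than $k$, the factor $-1$ recording the length and $y$ recording a small part; summing the geometric series over all lengths gives $F = (1-P)^{-1}$ with $P = -y\frac{x-x^k}{1-x}-\frac{x^k}{1-x}$, which simplifies to $F = \frac{1-x}{\,1-(1-y)(x-x^k)\,}$. To isolate the coefficient of $y^m$ I would use the structural decomposition of a composition with exactly $m$ small parts into $m$ small parts separating $m+1$ (possibly empty) runs of parts at least $k$: each run has generating function $\frac{1-x}{1-x+x^k}$ and each small part has generating function $-\frac{x-x^k}{1-x}$, so that $[y^m]F = (-1)^m\Bigl(\frac{x-x^k}{1-x}\Bigr)^{m}\Bigl(\frac{1-x}{1-x+x^k}\Bigr)^{m+1}$.

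Finally I would expand this into the stated triple sum. Writing $x-x^k=x(1-x^{k-1})$ and $(1-x^{k-1})^m=\sum_\ell(-1)^\ell\binom{m}{\ell}x^{(k-1)\ell}$ produces the binomial $\binom{m}{\ell}$. The remaining factor is handled by the key factorization $1-x+x^k=(1-x)(1-v)$ with $v:=-\frac{x^k}{1-x}$, after which $\frac{1-x}{(1-x+x^k)^{m+1}}$ becomes a scalar multiple of $\frac{v^m}{(1-v)^{m+1}}=\sum_i\binom{i}{m}v^i$; substituting $v^i=(-1)^ix^{ki}(1-x)^{-i}$ together with $(1-x)^{-i}=\sum_j\binom{i+j-1}{j}x^j$ produces the remaining binomials $\binom{i}{m}$ and $\binom{i+j-1}{j}$. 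Extracting the coefficient of $x^{n+k-1}$, negating, and collecting the powers of $x$ turns the exponent equation into $i+j+(k-1)(\ell+i-m-1)=n$ and the total sign into $(-1)^{i+\ell+1}$, which is exactly the claimed formula.

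The main obstacle will be this last bookkeeping step, and in particular choosing the right grouping: the naive expansion $(x-x^k)^i=x^i(1-x^{k-1})^i$ would give $\binom{i}{\ell}$ in place of the separated pair $\binom{m}{\ell}$ and $\binom{i}{m}$ and would not match the stated formula, so it is precisely the factorization $1-x+x^k=(1-x)(1-v)$ and the identity $\sum_i\binom{i}{m}v^i=v^m/(1-v)^{m+1}$ that do the real work. Tracking the accompanying shifts in the exponent of $x$ (a factor $x^{km}$ from $v^m$, a prefactor $x^m$ from $x-x^k$, and $x^{ki}$ from $v^i$) is where sign or index errors are most likely, but it is routine once the factorization is in place. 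As a consistency check I would specialize to $m=0$, where $\binom{0}{\ell}$ collapses the $\ell$-sum to $\ell=0$ and the formula reduces to $\sum_i(-1)^{i+1}\binom{i+j-1}{j}$ with $2i+j-1=n$ when $k=2$, recovering the formula of Theorem~\ref{thm2} and hence Theorem~\ref{thm1}.
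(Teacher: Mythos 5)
Your proposal is correct, and it begins from the same object the paper uses --- the bivariate signed generating function $\sum_\alpha(-1)^{\ell(\alpha)}y^{s(\alpha)}x^{|\alpha|}$ with $y$ marking parts less than $k$ --- but the middle of the argument is organized differently. The paper never passes to the closed rational form in $y$: it keeps the outer sum over the length $i$, rewrites the length-$i$ term as $\bigl(\tfrac{-x\,(y(1-x^{k-1})+x^{k-1})}{1-x}\bigr)^i$, and obtains $\binom{i}{m}$, $\binom{m}{\ell}$, $\binom{i+j-1}{j}$ in one pass by applying the binomial theorem to $(y(1-x^{k-1})+x^{k-1})^i$, to $(1-x^{k-1})^m$, and to $(1-x)^{-i}$; there $i$ is literally the number of parts and $\binom{i}{m}$ chooses which of them are small. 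You instead sum the geometric series first, extract $[y^m]$ via the runs-between-small-parts decomposition to get $(-1)^m(x-x^k)^m(1-x)/(1-x+x^k)^{m+1}$, and then need the factorization $1-x+x^k=(1-x)(1-v)$ with $v=-x^k/(1-x)$ together with $v^m/(1-v)^{m+1}=\sum_i\binom{i}{m}v^i$ to reproduce the same three binomials --- I checked the exponent and sign bookkeeping and it does land on the stated constraint $i+j+(k-1)(\ell+i-m-1)=n$ with sign $(-1)^{i+\ell+1}$, and your $m=0$, $k=2$ consistency check against Theorem~\ref{thm2} is right. What your route buys is a cleaner treatment of the boundary (coefficient extraction for $n\ge1$ rather than the paper's $\sum_{n\ge 1-k}$ convention) and an explicit combinatorial meaning for the factor $\binom{m}{\ell}$ via the run decomposition; what it costs is the extra $v$-substitution, which is exactly the step the paper's direct expansion of the length-$i$ power renders unnecessary.
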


\begin{proof}
We have
\begin{align*}
-\sum_{n\ge 1-k} \bar b_{k,n}^{(m)} x^{n+k-1}y^m
&= \sum_{i\ge0} \left(-y(x+x^2+\cdots+x^{k-1}) -x^k-x^{k+1}-\cdots \right)^i \\
&= \sum_{i\ge0} \left( \frac{-xy(1-x^{k-1})}{1-x} + \frac{-x^k}{1-x} \right)^i \\
&= \sum_{i\ge0} \left( \frac{-xy+x^{k}y-x^k}{1-x} \right)^i
\\
&= \sum_{i\ge0} (-x)^i \sum_{j\ge0} \binom{i+j-1}{j} x^j \sum_{m=0}^i \binom{i}{m} y^m(1-x^{k-1})^m x^{(k-1)(i-m)} \\
&= \sum_{i,j\ge0} (-x)^i \binom{i+j-1}{j} x^j \sum_{m=0}^i \binom{i}{m} y^m \sum_{\ell=0}^m \binom{m}{\ell} (-x^{k-1})^{\ell} x^{(k-1)(i-m)}.
\end{align*}
Extracting the coefficient of $x^{n+k-1}y^m$ gives the desired formula for $\bar b_{k,n}^{(m)}$.
\end{proof}

We find a sequence in OEIS~\cite[A122918]{OEIS} that coincides with $(-1)^n \bar b_{k,n}^{(m)}$ when $m=2$ and $k=1$.

\section{Questions}\label{sec:questions}

Andrews~\cite{Andrews} defined a partition of $n$ to have \emph{initial $k$-repetitions} if every part less than $j$ is repeated at least $k$ times whenever a part $j$ is repeated at least $k$ times; taking $k=1$ in this definition gives \emph{partitions without gaps}, which were first studied by Fine~\cite{Fine}.
Andrews~\cite{Andrews} established the following results on partitions with initial $k$-repetitions. 
\begin{itemize}
\item
The number of partitions of $n$ with initial $k$-repetitions equals the number of partitions of $n$ into parts indivisible by $2k$ and by Glaisher's theorem, also equals the number of partitions of $n$ with no parts occurring $2k$ or more times.
\item
Let $D_e(m,n)$ (or $D_o(m,n)$, resp.) denotes the number of partitions of $n$ with initial $2$-repetitions and with $m$ different parts, of which an even (or odd, resp.) number have multiplicity one. 
Then
\[ 
D_e(m,n)-D_o(m,n) = 
\begin{cases}
(-1)^j, & \text{if } m=j,\ n=j(j+1)/2,\ j\ge0; \\
0, & \text{otherwise}.
\end{cases} 
\]
\end{itemize}
Since the last result resembles Legendre's theorem, we ask for a composition analogue, which may require an appropriate definition of compositions with ``initial $k$-repetitions.''
If this could be done, it would also be interesting to search for composition analogues of various Legendre-type theorems obtained recently by Nyirenda and Mugwangwavari~\cite{NM} based on work of Andrews~\cite{Andrews}.

\section*{Acknowledgment}

The author uses SageMath to help verify the closed formulae obtained in this paper.

\end{document}